\documentclass[11pt]{amsart}
\usepackage{amssymb}
\usepackage{xcolor}
\usepackage{mathrsfs}
\numberwithin{equation}{section}
\allowdisplaybreaks[4]
\usepackage{amsfonts}
\usepackage{amsmath}
\newtheorem{Theorem}{Theorem}[section]
\newtheorem{Proposition}[Theorem]{Proposition}

\newtheorem{Lemma}[Theorem]{Lemma}

\newtheorem{Remark}[Theorem]{Remark}

\def\Xint#1{\mathchoice
{\XXint\displaystyle\textstyle{#1}}%
{\XXint\textstyle\scriptstyle{#1}}%
{\XXint\scriptstyle\scriptscriptstyle{#1}}%
{\XXint\scriptscriptstyle\scriptscriptstyle{#1}}%
\!\int}
\def\XXint#1#2#3{{\setbox0=\hbox{$#1{#2#3}{\int}$}
\vcenter{\hbox{$#2#3$}}\kern-.5\wd0}}

\def\dashint{\Xint-}

\def\bbZ{\mathbb{Z}}
\def\bbR{\mathbb{R}}

\begin{document}

\title[A New $A_p$-$A_\infty$ Estimate]{A new $A_p$-$A_\infty$ estimate for Calder\'on-Zygmund operators in spaces of homogeneous type}

\thanks{This work was partially supported  by the
National Natural Science Foundation of China(11371200), the Research Fund for the Doctoral Program
of Higher Education (20120031110023) and the Ph.D. Candidate Research Innovation Fund of Nankai University.}

\author{Kangwei Li}

\address{School of Mathematical Sciences and LPMC,  Nankai University, Tianjin~300071, China}
\email{likangwei9@mail.nankai.edu.cn}

\keywords{Two weight inequalities, bump conditions, spaces of homogeneous type, sparse operators}
\subjclass[2010]{42B25}

\begin{abstract}
In this note, we study the $A_p$-$A_\infty$ estimate for Calder\'on-Zygmund operators in terms of the weak $A_\infty$ characteristics in spaces of homogeneous type. The weak $A_\infty$ class was introduced recently by Anderson, Hyt\"onen and Tapiola. Our estimate is new even in the Euclidean space.
\end{abstract}

\maketitle

\section{Introduction and Main Results}
Let $T$ be a Calder\'on-Zygmund operator and $(w,\sigma)$ be a pair of weights. In the Euclidean setting, Hyt\"onen and Lacey \cite{HL} proved that if
\[
[w,\sigma]_{A_p}:=\sup_{Q: \textup{cubes in $\bbR^n$}}\frac{w(Q)}{|Q|}\Big(\frac{\sigma(Q)}{|Q|}\Big)^{p-1}<\infty
\]
and $w,\sigma\in A_\infty$,
then the following estimate holds
\begin{equation}\label{eq:HL}
\|T(\cdot\sigma)\|_{L^p(\sigma)\rightarrow L^p(w)}\le C_{n,p,T} [w,\sigma]_{A_p}^{\frac 1p}([w]_{A_\infty}^{\frac 1{p'}}+
[\sigma]_{A_\infty}^{\frac 1p}).
\end{equation}
It is well known that \eqref{eq:HL} extends the $A_2$ theorem, which was first proved by Hyt\"onen \cite{H}, see also in \cite{L} for a simple proof by Lerner, and in \cite{AV}, Anderson and Vagharshakyan also gave a proof in the spaces of homogeneous type. Our goal is to extend \eqref{eq:HL} with the weak $A_\infty$ characteristics (which will be introduced below) to the spaces of homogeneous type.

Now let us recall some definitions. By a space of homogeneous type ($SHT$) we mean an ordered triple $(X,\rho,\mu)$, where $X$ is a set, $\rho$ is a
quasi-metric on $X$, i.e.,
\begin{enumerate}
\item $\rho(x,y)=0$ if and only if $x=y$;
\item $\rho(x,y)=\rho(y,x)$ for all $x,y\in X$;
\item $\rho(x,z)\le \kappa(\rho(x,y)+\rho(y,z))$ for some $\kappa\ge 1$ and all $x,y,z\in X$;
\end{enumerate}
and $\mu$ is a nonnegative Borel measure on $X$ which satisfies the following doubling condition
\[
\mu(B(x,2r))\le D \mu(B(x,r)),
\]
where $B(x,r):=\{y\in X: \rho(x,y)<r\}$ and the dilation of a ball $B:=B(x,r)$ denoted by $\lambda B$ will be understood as $B(x,\lambda r)$. We point out that the doubling property implies that  any ball $B(x,r)$  can be covered by at most $N:=N_{D,\kappa}$ balls of radius $r/2$.
Next let us introduce the weak $A_\infty$ class, which was first introduced by Anderson, Hyt\"onen and Tapiola in \cite{AHT}. For every $\delta>1$, we say $w$ belongs to $\delta$-weak $A_\infty$ class $A_\infty^\delta$
if
\[
[w]_{A_\infty^\delta}:=\sup_{B}\frac 1{w(\delta B)}\int_B M(\mathbf 1_B w)(y) d\mu(y)<\infty,
\]
where the supremum is taken over all balls $B\subset X$. We collect some properties of this weak $A_\infty$ class and refer the readers to \cite{AHT} for a proof.
\begin{Proposition}
\begin{enumerate}
\item $A_\infty^{\delta}=A_\infty^{\delta'}$ for all $\delta,\delta'>\kappa$. So hereafter, we denote by $A_\infty^{\textup{weak}}:=A_\infty^{2\kappa}$ the weak $A_\infty$ class;
\item For any $w\in A_\infty^{\textup{weak}}$, we have $[w]_{A_\infty^{\textup{weak}}}\ge 1/{2(2\kappa)^{\log_2 N}}$;
\item Let $w\in A_\infty^{\rm{weak}}$. Then there exists a constant $\alpha:=\alpha(\kappa, D)$  such that for
every $0<\epsilon \le \frac 1{\alpha [w]_{A_\infty}^{\rm{weak}}}$,
\begin{equation}\label{eq:RHI}
\bigg(\dashint_B w^{1+\epsilon} d\mu\bigg)^{\frac 1{1+\epsilon}}\lesssim \dashint_{2\kappa B}w d\mu.
\end{equation}
\end{enumerate}
\end{Proposition}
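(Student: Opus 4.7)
The plan is to handle the three claims separately, with (3) being the substantive statement. For (1), one direction is immediate: if $\delta\le\delta'$ then $w(\delta B)\le w(\delta' B)$, yielding $[w]_{A_\infty^{\delta'}}\le[w]_{A_\infty^\delta}$, and the reverse follows from a covering argument exploiting $\delta,\delta'>\kappa$ together with the doubling of $\mu$, which allows one to cover $\delta'B$ by a controlled-overlap family of balls whose $w$-mass is absorbed by $w(\delta B)$. For (2), I would test the definition on an arbitrary ball $B$: since $B$ itself is admissible in the supremum defining $M$, $M(\mathbf{1}_B w)(y)\ge w(B)/\mu(B)$ for $y\in B$, so $\int_B M(\mathbf{1}_B w)\,d\mu\ge w(B)$ and $[w]_{A_\infty^{\textup{weak}}}\ge w(B)/w(2\kappa B)$. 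Letting $B$ shrink to a Lebesgue point of $w$ reduces the problem to the lower bound $\mu(B)/\mu(2\kappa B)\ge 1/(2(2\kappa)^{\log_2 N})$, which follows by iterating the covering-by-half-balls property in the excerpt $\lceil\log_2(2\kappa)\rceil$ times and tracking the constant.

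For (3), I would run the classical Gehring self-improvement. Writing
\[
\int_B w^{1+\epsilon}\,d\mu=\epsilon\int_0^\infty\lambda^{\epsilon-1}w(\{y\in B:w(y)>\lambda\})\,d\lambda
\]
and splitting at $\lambda_0:=\dashint_{2\kappa B}w\,d\mu$, the low-$\lambda$ piece is immediate. For $\lambda>\lambda_0$, a Calder\'on-Zygmund stopping-time decomposition of $\mathbf{1}_B w$ on a Hyt\"onen-Kairema dyadic system adapted to $B$ produces stopping families at geometric levels $c^k\lambda_0$; the $A_\infty^{\textup{weak}}$ inequality converts into an exponential decay $w(\{M(\mathbf{1}_B w)>c^k\lambda_0\}\cap B)\le\eta^k w(2\kappa B)$, with $\eta=\eta([w]_{A_\infty^{\textup{weak}}})<1$. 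Summing the geometric series in $k$ then yields the $L^{1+\epsilon}$ bound precisely in the range $\epsilon\le 1/(\alpha[w]_{A_\infty^{\textup{weak}}})$, where $\alpha=\alpha(\kappa,D)$ absorbs the doubling constants arising in the covering steps.

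The main obstacle is (3): the left-hand side of \eqref{eq:RHI} averages $w$ over $B$ while the right-hand side averages over $2\kappa B$, so the Gehring iteration must coordinate stopping cubes at the scale of $B$ with $A_\infty^{\textup{weak}}$ estimates that naturally live at the scale of $2\kappa B$. In the Euclidean case these two scales coincide, but here the quasi-metric enlargement forces one to use a dyadic system of Hyt\"onen-Kairema type to patch together the local estimates without losing the sharp threshold on $\epsilon$; the full details are carried out in \cite{AHT}.
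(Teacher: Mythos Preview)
The paper does not supply a proof of this proposition at all; it simply states the three properties and refers the reader to \cite{AHT} for the arguments. Your proposal is therefore not in conflict with anything in the paper, and in fact you end up pointing to the same reference for the full details of part (3). The sketches you give for (1)--(3) are the standard ones and are consistent with what is carried out in \cite{AHT}.

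One small comment on your outline for (2): the reduction via a Lebesgue point to the purely measure-theoretic inequality $\mu(B)/\mu(2\kappa B)\ge 1/(2(2\kappa)^{\log_2 N})$ is morally correct, but as written it would more naturally produce a lower bound in terms of the doubling constant $D$ (namely $D^{-\lceil\log_2(2\kappa)\rceil}$) rather than the covering constant $N$. Recovering the specific constant $1/(2(2\kappa)^{\log_2 N})$ requires the covering-by-half-radius-balls argument applied directly to the $w$-measure via the chain of balls, not passage through $\mu$; this is a cosmetic point since both constants depend only on $(\kappa,D)$, but it is worth tightening if you want to match the stated bound exactly.
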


Now we are ready to state the main result in this paper.
\begin{Theorem}\label{thm:m1}
Given $p$, $1<p<\infty$ and an $SHT(X,\rho, \mu)$. Let $T$ be any Calder\'on-Zygmund operator and $(w, \sigma)$ be a pair of weights. Then
we have
\[
\|T(\cdot \sigma)\|_{L^p(\sigma)\rightarrow L^p(w)}\le
C [w,\sigma]_{A_p}^{\frac 1p}(([w]_{A_\infty}^{\rm{weak}})^{\frac 1{p'}}+ ([\sigma]_{A_\infty}^{\rm{weak}})^{\frac 1{p}}),
\]
where
\[
[w,\sigma]_{A_p}:=\sup_{B: \textup{balls in $X$}}\bigg(\dashint_B wd\mu\bigg)\bigg( \dashint_{B} \sigma d\mu\bigg)^{p-1}
\]
and the constant $C$ is independent of the weights $(w, \sigma)$.
\end{Theorem}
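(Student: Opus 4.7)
The plan is to follow the by-now-standard three-step pipeline: sparse domination, dualization to a bilinear form, and a principal-cubes/Carleson embedding argument. The novelty must enter at the last step, where one normally invokes the $A_\infty$ reverse H\"older inequality but instead must work with the weaker inequality \eqref{eq:RHI} available under the weak $A_\infty$ assumption.

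\textbf{Step 1: Sparse domination.} I would first reduce matters to a sparse operator. Using the Hyt\"onen--Kairema adjacent dyadic systems $\{\mathscr{D}^t\}_{t=1}^T$ on the $SHT$ and the Lerner-type sparse domination (available in this setting from the work of Anderson--Vagharshakyan or Lerner--Nazarov), we have
\[
|Tf(x)| \lesssim \sum_{t=1}^{T} \mathcal{A}_{\mathcal{S}^t}f(x), \qquad \mathcal{A}_{\mathcal{S}}f := \sum_{Q\in \mathcal{S}} \langle f\rangle_Q \mathbf{1}_Q,
\]
where each $\mathcal{S}^t \subset \mathscr{D}^t$ is an $\eta$-sparse family. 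It thus suffices to prove the target inequality with $T$ replaced by a single $\mathcal{A}_{\mathcal{S}}$.

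\textbf{Step 2: Bilinear form and stopping cubes.} By duality the claim reduces to bounding
\[
\Lambda(f,g) \;=\; \sum_{Q\in\mathcal{S}} \langle f\sigma\rangle_Q^\mu \,\langle gw\rangle_Q^\mu\, \mu(Q)
\]
for $f\in L^p(\sigma)$, $g\in L^{p'}(w)$. Following Hyt\"onen--Lacey, I would build two families of principal cubes $\mathcal{F}\subset \mathcal{S}$ (for $f\sigma$, relative to $\sigma$) and $\mathcal{G}\subset \mathcal{S}$ (for $gw$, relative to $w$), each with doubling averages and satisfying $\sum_{F'\subset F,\,F'\in \mathcal{F}} \sigma(F') \lesssim \sigma(F)$ and similarly for $\mathcal{G}$. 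After the standard reorganization by $(\pi_\mathcal{F}(Q),\pi_\mathcal{G}(Q))$, one is left with a sum that is controlled by $[w,\sigma]_{A_p}^{1/p}$ times
\[
\Bigl(\sum_{F\in \mathcal{F}} \langle f\rangle^\sigma_F\,^p \,\sigma(P(F))\Bigr)^{1/p}
\Bigl(\sum_{G\in \mathcal{G}} \langle g\rangle^w_G\,^{p'}\, w(P(G))\Bigr)^{1/p'},
\]
where $P(F)$ is an appropriate ``pivotal'' subset of $F$ that records the weight mass actually used in the sparse decomposition.

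\textbf{Step 3: Weak $A_\infty$ via weak reverse H\"older.} The key novelty is controlling the sums $\sum_{F\in \mathcal{F}} \sigma(P(F)) \le \sigma(F)(1+\epsilon)^{\text{something}}$ using \eqref{eq:RHI} with $\epsilon \sim 1/[\sigma]_{A_\infty}^{\textup{weak}}$. Concretely, H\"older's inequality on the ball $B_F$ comparable to $F$ gives
\[
\sigma(P(F)) \le \mu(P(F))^{\epsilon/(1+\epsilon)}\Bigl(\int_{B_F}\sigma^{1+\epsilon}\Bigr)^{1/(1+\epsilon)}
\lesssim \Bigl(\tfrac{\mu(P(F))}{\mu(B_F)}\Bigr)^{\epsilon/(1+\epsilon)} \sigma(2\kappa B_F),
\]
and I would transfer $\sigma(2\kappa B_F)$ back to $\sigma(F)$ using the doubling of $\sigma$ (which follows from the $A_p$ hypothesis combined with doubling of $\mu$). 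Feeding this into the Carleson embedding type estimate for $\mathcal{F}$ produces the factor $([\sigma]_{A_\infty}^{\textup{weak}})^{1/p}$, and the symmetric argument for $\mathcal{G}$ yields $([w]_{A_\infty}^{\textup{weak}})^{1/p'}$.

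\textbf{Main obstacle.} The genuine difficulty is the enlargement $B \mapsto 2\kappa B$ forced on the right-hand side of \eqref{eq:RHI}: dyadic cubes in an $SHT$ are not closed under the $2\kappa$-dilation, so we cannot simply localize within the principal cube $F$. I expect the argument to hinge on comparing a dyadic cube $Q$ to a ball $B_Q$ of comparable measure (Hyt\"onen--Kairema), applying \eqref{eq:RHI} on that ball, and then absorbing the enlargement into a Carleson-packing estimate that only costs a geometric factor. Verifying this absorption carefully, together with the correct quantitative dependence on $\epsilon\sim 1/[\sigma]_{A_\infty}^{\textup{weak}}$ in the exponent, is where the work lies.
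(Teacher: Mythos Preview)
Your Steps 1 and 2 are fine and match the paper's use of sparse domination and testing/stopping cubes. The gap is in Step 3, and it is exactly the point you flag as the ``main obstacle'': you cannot transfer $\sigma(2\kappa B_F)$ back to $\sigma(F)$. Your justification --- ``doubling of $\sigma$, which follows from the $A_p$ hypothesis combined with doubling of $\mu$'' --- is false in the two-weight setting. The condition $[w,\sigma]_{A_p}<\infty$ does \emph{not} force either weight to be doubling (take $w\equiv 1$ and $\sigma$ bounded but non-doubling), and the whole point of the weak $A_\infty$ class is that it contains non-doubling weights; indeed $A_\infty^{\textup{weak}}\supsetneq A_\infty$ already on $\mathbb{R}^n$. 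Since the balls $2\kappa B_F$ for principal cubes $F$ can overlap their neighbors and carry arbitrarily large $\sigma$-mass relative to $\sigma(F)$, there is no Carleson-packing fix that ``only costs a geometric factor''. As written, the argument breaks here.

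The paper avoids this trap by never applying \eqref{eq:RHI} inside the stopping-cube sums. Instead it introduces an intermediate power-bump theorem: for $\Phi(t)=t^{p'r}$, $\Psi(t)=t^{ps}$ one has
\[
\|T(\cdot\sigma)\|_{L^p(\sigma)\to L^p(w)}\lesssim [w,\sigma]_{\Phi,p}\,[\bar\Phi]_{B_p}^{1/p}+[\sigma,w]_{\Psi,p}\,[\bar\Psi]_{B_{p'}}^{1/p'},
\]
proved by the Hyt\"onen--Lacey/Cruz-Uribe--Reznikov--Volberg machinery (testing conditions, the exponential decay Lemma~\ref{decay}, and an interpolation between $\Phi$ and a slightly weaker $\Phi_0$). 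The weak reverse H\"older inequality \eqref{eq:RHI} is used \emph{only once}, at the level of the supremum defining $[w,\sigma]_{\Phi,p}$: choosing $r=1+(\alpha[\sigma]_{A_\infty}^{\textup{weak}})^{-1}$ gives
\[
\Big(\dashint_B w\Big)^{1/p}\Big(\dashint_B\sigma^r\Big)^{1/(p'r)}
\lesssim \Big(\dashint_B w\Big)^{1/p}\Big(\dashint_{2\kappa B}\sigma\Big)^{1/p'}
\lesssim \Big(\dashint_{2\kappa B} w\Big)^{1/p}\Big(\dashint_{2\kappa B}\sigma\Big)^{1/p'}
\le C\,[w,\sigma]_{A_p}^{1/p},
\]
where the enlargement $B\to 2\kappa B$ on the $w$-average uses only the doubling of $\mu$, not of the weights. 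A direct computation then gives $[\bar\Phi]_{B_p}\lesssim[\sigma]_{A_\infty}^{\textup{weak}}$. This detour through bump characteristics is precisely what lets the $2\kappa$-dilation be absorbed into the $A_p$ supremum rather than into a localized $\sigma$-mass estimate.
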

\begin{Remark}
Note that the result is new already in the case that $X=\bbR^n$ with Euclidean distance and Lebesgue measure, since
the weak $A_\infty$ class is strictly larger than classical $A_\infty$ already in this setting.
\end{Remark}

\section{Proof of the Main result}
In this section, we will give a proof for Theorem~\ref{thm:m1}. First, we introduce the bump conditions. By a Young function $\phi$, we mean that $\phi: [0, \infty)\rightarrow [0, \infty)$ is continuous, convex
and increasing satisfying $\phi(0) = 0$ and $\phi(t)/t \rightarrow \infty$ as $t \rightarrow \infty$. Recall that the
complementary function of $\phi$, denoted by $\bar \phi$, is defined by
\[
\bar \phi(t):= \sup_{s>0}\{st- \phi(s)\}.
\]
 Given two Young functions $\Phi, \Psi$, define
\begin{eqnarray*}
[w,\sigma]_{\Phi, p}&:=&\sup_B \bigg(\dashint_B w d\mu\bigg)^{\frac 1p}\|\sigma^{\frac 1{p'}}\|_{\Phi, B},\\
{[\sigma, w]_{\Psi, p}}&:=&\sup_B \|w^{\frac 1p}\|_{\Psi, B}\bigg(\dashint_B\sigma d\mu\bigg)^{\frac 1{p'}},
\end{eqnarray*}
where the supremum is taken over all balls in $X$ and the Luxemburg norm is defined by
\[
\|f\|_{\Phi, B}:=\inf \{ \lambda>0, \dashint_B \Phi\Big(\frac f \lambda\Big) d\mu \le 1 \}.
\]
There is a famous problem named the separated bump conjecture, which states that for any Calder\'on-Zygmund operator
$T$, if
\begin{equation}\label{eq:bump}
[w,\sigma]_{\Phi, p}+[\sigma, w]_{\Psi, p}<\infty,
\end{equation}
where $\bar\Phi\in B_p$ and $\bar\Psi\in B_{p'}$ (the $B_p$ condition is recalled in \eqref{eq:bp} below),
then $T(\cdot\sigma)$ is bounded from $L^p(\sigma)$ to $L^p(w)$.
For
the so-called log-bumps, namely, when
\[
\Phi(t)=t^{p'}\log(e+t)^{p'-1+\delta}\,\,\mbox{and}\,\, \Psi(t)=t^p\log(e+t)^{p-1+\delta},
\]
this conjecture has been verified in \cite{CRV} in the Euclidean setting and for the spaces of homogeneous type, see \cite{ACM}. For more about the separated bump conjecture, see \cite{Lac, NRV} and the references therein.  In the rest of this paper, by carefully calculating the constants, we will show
the following quantitative estimate for the power bumps.
\begin{Theorem}\label{thm:m2}
Given $p$, $1<p<\infty$ and an $SHT(X,\rho,\mu)$. Let $T$ be a Calder\'on-Zygmund operator and $(w,\sigma)$ is a pair of weights satisfying \eqref{eq:bump} for
$\Phi(t)=t^{p'r}$ and $\Psi(t)=t^{ps}$, where $1<r,s<1+2(2\kappa)^{\log_2 N}/{\alpha(\kappa, D)}$. Then we have
\[
\|T(\cdot\sigma)\|_{L^p(\sigma)\rightarrow L^p(w)}\le C_{T, p, D, \kappa}([w,\sigma]_{\Phi,p} [\bar\Phi]_{B_p}^{1/p} +[\sigma,w]_{\Psi,p}[\bar\Psi]_{B_{p'}}^{1/{p'}}
),
\]
where recall that for a Young function $\phi\in B_p$,
\begin{equation}\label{eq:bp}
[\phi]_{B_p}:=\int_{1/2}^\infty \frac{\phi(t)}{t^p}\frac{dt}{t}.
\end{equation}
\end{Theorem}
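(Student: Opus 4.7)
The plan is to combine a pointwise sparse domination with a parallel corona argument tailored to the bump pair $(\Phi,\Psi)$, quantifying the Orlicz maximal step in terms of the $B_p$-constants.

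First I would invoke the now-standard pointwise sparse domination of a Calder\'on-Zygmund operator in an $SHT$, obtained by adapting Lerner's median technique to the Hyt\"onen-Kairema adjacent dyadic systems as in \cite{AV} and \cite{ACM}. This produces finitely many sparse families $\mathcal{S}$, and by duality it suffices to prove the sparse bilinear estimate
\[
\Lambda_{\mathcal{S}}(f,g):=\sum_{Q\in\mathcal{S}}\langle f\sigma\rangle_Q\langle gw\rangle_Q\mu(Q)\lesssim \bigl([w,\sigma]_{\Phi,p}[\bar\Phi]_{B_p}^{1/p}+[\sigma,w]_{\Psi,p}[\bar\Psi]_{B_{p'}}^{1/p'}\bigr)\|f\|_{L^p(\sigma)}\|g\|_{L^{p'}(w)}
\]
for nonnegative $f,g$, with $\langle h\rangle_Q:=\mu(Q)^{-1}\int_Q h\,d\mu$.

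The heart of the argument is a parallel corona. I would construct two stopping trees inside $\mathcal{S}$: principal cubes $\mathcal{F}$ adapted to $f$ via its $\sigma$-averages and $\mathcal{G}$ adapted to $g$ via its $w$-averages, each enforcing $\langle f\rangle_Q^\sigma\lesssim\langle f\rangle_{\pi_\mathcal{F}(Q)}^\sigma$ and its $g$-analogue for every $Q$. Split $\mathcal{S}=\mathcal{S}_1\sqcup\mathcal{S}_2$ according to whether $\pi_\mathcal{G}(Q)\subseteq\pi_\mathcal{F}(Q)$ or the reverse; by symmetry only one case must be written out. On the $\mathcal{G}$-deeper piece, pull $\langle g\rangle_G^w$ outside the $Q$-sum via $\mathcal{G}$-stopping, leaving $\sum_Q w(Q)\langle f\sigma\rangle_Q$, and apply the generalized H\"older inequality in Orlicz spaces
\[
\langle f\sigma\rangle_Q\lesssim\|f\sigma^{1/p}\|_{\bar\Phi,Q}\,\|\sigma^{1/p'}\|_{\Phi,Q},
\]
coupling $\|\sigma^{1/p'}\|_{\Phi,Q}$ with the factor $(\dashint_Q w\,d\mu)^{1/p}$ extracted from $w(Q)$ so as to recognize $[w,\sigma]_{\Phi,p}$. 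A bilinear H\"older $(p,p')$ then separates the residue into an Orlicz-maximal sum in $f$ and a Carleson sum in $g$.

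The Orlicz-maximal sum is controlled using sparseness $\mu(Q)\le 2\mu(E_Q)$ on pairwise disjoint $E_Q\subset Q$ together with the sharp Cruz-Uribe-Martell-P\'erez bound $\|M_{\bar\Phi}h\|_{L^p(\mu)}\lesssim[\bar\Phi]_{B_p}^{1/p}\|h\|_{L^p(\mu)}$, whose proof relies only on a Calder\'on-Zygmund decomposition and so transfers directly to the $SHT$ setting; this produces the factor $[\bar\Phi]_{B_p}^{1/p}\|f\|_{L^p(\sigma)}$. The Carleson sum $\sum_G(\langle g\rangle_G^w)^{p'}w(G)$ is bounded by $\|g\|_{L^{p'}(w)}^{p'}$ via the standard Carleson embedding for $w$-measures on $\mathcal{G}$, and the symmetric case $\mathcal{S}_1$ using $\Psi,\bar\Psi\in B_{p'}$ produces the second term in the estimate. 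The main obstacle is preserving the sharp exponents $1/p$ and $1/p'$ on the $B_p$-constants throughout the Orlicz H\"older and maximal steps, since this precise quantitative dependence is what goes beyond the purely qualitative bump theorems of \cite{ACM}; the remaining ingredients -- sparse domination, parallel corona, and Carleson embedding -- carry over to the $SHT$ framework by the usual Hyt\"onen-Kairema-plus-doubling recipe.
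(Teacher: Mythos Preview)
Your parallel-corona approach has a real gap at the ``Carleson sum'' step. After you freeze $\langle g\rangle_G^w$, extract $[w,\sigma]_{\Phi,p}$ via the Orlicz H\"older, and apply H\"older $(p,p')$, what you are actually left with on the $g$-side is
\[
\Bigl(\sum_{G\in\mathcal G}(\langle g\rangle_G^w)^{p'}\!\!\sum_{\substack{Q\in\mathcal S\\ \pi_{\mathcal G}(Q)=G}} w(Q)\Bigr)^{1/p'},
\]
not $\bigl(\sum_G(\langle g\rangle_G^w)^{p'}w(G)\bigr)^{1/p'}$. To collapse the inner sum to $w(G)$ you need the $\mu$-sparse family $\mathcal S$ to be $w$-Carleson, i.e.\ $\sum_{Q\in\mathcal S,\,Q\subset R}w(Q)\lesssim w(R)$. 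That holds when $w\in A_\infty$, but Theorem~\ref{thm:m2} assumes only the separated bump condition~\eqref{eq:bump}, under which $w$ need not even be doubling. The principal-cubes Carleson embedding you invoke controls $\sum_G(\langle g\rangle_G^w)^{p'}w(G)$ precisely because $\mathcal G$-children are pairwise disjoint; it says nothing about the full sparse sum inside each $G$. Without this step the argument does not close.

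The paper avoids this obstruction by a rather different route. It first reduces to the testing inequalities (Lemma~\ref{lm:testing}), so no second function $g$ appears; then it stratifies $\mathcal S$ into levels $\mathcal S_a=\{Q:\langle w\rangle_Q\langle\sigma\rangle_Q^{p-1}\sim 2^a\}$ and, inside each level, builds stopping cubes $\mathcal P^a$ from \emph{$\sigma$-averages}. The exponential decay Lemma~\ref{decay} collapses the testing norm to $\sum_{P\in\mathcal P^a}\langle\sigma\rangle_P^p\,w(P)$, and here $w(P)$ is immediately rewritten as $\langle w\rangle_P\,\mu(P)$: the factor $\langle w\rangle_P$ is absorbed into the bump constant, while $\mu(P)$ is handled by $\mu$-sparseness and Lemma~\ref{lm:maximalbump}---no $w$-Carleson property is ever needed. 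The remaining device, with no analogue in your sketch, is the intermediate power bump $\Phi_0(t)=t^{p'(r+1)/2}$: interpolating $\|\sigma^{1/p'}\|_{\Phi_0,P}\le\|\sigma^{1/p'}\|_{\Phi,P}^{1-\gamma}\|\sigma^{1/p'}\|_{p',P}^{\gamma}$ with $\gamma=\tfrac{1}{2(r+1)}$ extracts a factor $2^{a\gamma}$ that makes the sum over $a$ geometric. This is exactly what produces the sharp exponent $[\bar\Phi]_{B_p}^{1/p}$ while keeping the argument purely two-weight.
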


Now with Theorem~\ref{thm:m2} we are ready to prove Theorem~\ref{thm:m1}.
\begin{proof}[Proof of Theorem~\ref{thm:m1}]
In fact, taking
\[
r=1+\frac 1{\alpha [\sigma]_{A_\infty}^{\rm{weak}}}.
\]
By the reverse H\"older's inequality \eqref{eq:RHI}, we have
\begin{eqnarray*}
[w,\sigma]_{\Phi,p}&=&\sup_B \bigg(\dashint_B w d\mu\bigg)^{\frac 1p}\bigg( \dashint_B \sigma^{r}d\mu\bigg)^{\frac1{p'r}}\\
&\lesssim&\sup_B \bigg(\dashint_B wd\mu\bigg)^{\frac 1p}\bigg( \dashint_{2\kappa B} \sigma d\mu\bigg)^{\frac1{p' }}\\
&\le& C_{p,D,\kappa} [w,\sigma]_{A_p}^{1/p}.
\end{eqnarray*}
For $\Phi(t)=t^{p'r}$, by definition, we know
\[
\bar \Phi(t)=t^{(p'r)'}\Big(\frac 1{p'r}\Big)^{\frac 1{p'r-1}}\frac{p'r-1}{p'r}\eqsim_{p,\kappa, D} t^{(p'r)'}.
\]
Hence
\begin{eqnarray*}
 [\bar\Phi]_{B_p} \eqsim_{p,\kappa, D}\int_{1/2}^\infty \frac {t^{(p'r)'}}{t^p}\frac {dt}t= 2^{\frac{(r-1)p}{ p'r-1}}\frac{ p'r-1}{(r-1)p}\le C_{p,\kappa, D} [\sigma]_{A_\infty}^{\rm{weak}} .
\end{eqnarray*}
And by taking similar value of $s$ we can get the result as desired.
\end{proof}
In the rest of this paper, we will focus on the proof of Theorem~\ref{thm:m2}.
We will reduce the estimates for Calder\'on-Zygmund operators to the so-called sparse operators. So first let us introduce the following result, which can be found in \cite{HK}, see also in \cite{C}. Here we follow the version used in \cite{AHT}.
\begin{Theorem}\label{dyadicsystem}
Let $0<\eta<1$ satisfy $96\kappa^6\eta\le 1$. Then there exists countable sets of points $\{z_\alpha^{k,t}: \alpha\in \mathscr A_k \}$, $k\in \bbZ$, $t=1,2,\cdots,K=K(\kappa, N,\eta)$, and a finite number of dyadic systems $\mathscr D^t:=
\{Q_\alpha^{k,t}: \alpha\in \mathscr A_k, k\in\bbZ\}$, such that
\begin{enumerate}
\item for every $t\in \{1,2,\cdots,K\}$ we have
\begin{enumerate}
\item $X=\cup_{\alpha\in \mathscr A_k}Q_\alpha^{k,t}$(disjoint union) for every $k\in\bbZ$;
\item $Q, P\in \mathscr D^t$ $\Rightarrow$ $Q\cap P=\{\emptyset, Q, P\}$;
\item $Q_\alpha^{k,t}\in \mathscr D^t$ $\Rightarrow$ $B(z_\alpha^{k,t}, c_1\eta^k)\subseteq Q_\alpha^{k,t}\subset B(z_\alpha^{k,t}, C_1\eta^k) $, where $c_1:=(12\kappa^4)^{-1}$ and $C_1:=4\kappa^2$;
\end{enumerate}
\item for every ball $B=B(x,r)$ there exists a cube $Q_B\in \cup_t \mathscr D^t$ such that $B\subseteq Q_B$ and $l(Q_B)=\eta^{k-1}$, where $k$ is the unique integer such that $\eta^{k+1}<r \le \eta^k$ and $l(Q_B)=\eta^{k-1}$ means that $Q_B=Q_\alpha^{k-1,t}$ for some indices $\alpha$ and $t$.
\end{enumerate}
\end{Theorem}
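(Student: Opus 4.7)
The plan is to adapt the Hytönen--Kairema construction (which descends from Christ's original argument) to the quasi-metric $SHT$ setting, following the presentation in \cite{AHT}. There are two essentially independent tasks: first, produce a single dyadic system whose cubes satisfy (a), (b) and (c); second, produce a finite family $\mathscr D^1, \dots, \mathscr D^K$ so that every ball sits inside some cube of comparable size as required by (2). The hypothesis $96\kappa^6\eta \leq 1$ is exactly the geometric slack needed to absorb losses from iterated applications of the quasi-triangle inequality and to deliver the clean constants $c_1 = (12\kappa^4)^{-1}$ and $C_1 = 4\kappa^2$.

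For a single system, at each scale $k \in \bbZ$ I would first fix a maximal $\eta^k$-separated set $\{z_\alpha^k : \alpha \in \mathscr A_k\}$, so distinct centers satisfy $\rho(z_\alpha^k, z_\beta^k) \geq \eta^k$ and every $x \in X$ is within distance $\eta^k$ of some center. Well-order each $\mathscr A_k$, define a parent map $\pi: \mathscr A_{k+1} \to \mathscr A_k$ by sending $\alpha$ to the least $\beta$ with $\rho(z_\alpha^{k+1}, z_\beta^k) < \eta^k$ (existence by maximality of the coarser net), and assign each $x \in X$ to its lexicographically least nearest scale-$k$ center to obtain preliminary Voronoi cells $\tilde Q_\alpha^k$. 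Finally set
\[
Q_\alpha^k := \bigcup_{j \geq k} \, \bigcup_{\pi^{j-k}(\beta) = \alpha} \tilde Q_\beta^j.
\]
Properties (a) and (b) are immediate from this construction: the $\tilde Q_\alpha^k$ partition $X$ at each fixed scale, and the ancestor relation through $\pi$ encodes the nestedness in (b).

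The sandwich in (c) is the technical heart of the argument. The inner inclusion $B(z_\alpha^k, c_1\eta^k) \subseteq Q_\alpha^k$ reduces, at every finer scale $j \geq k$, to showing that any $x$ with $\rho(x, z_\alpha^k) < c_1\eta^k$ is strictly closer to the $j$-th generation descendant of $z_\alpha^k$ containing it than to any non-descendant center; this follows from the $\eta^j$-separation of each net combined with a single application of the quasi-triangle inequality, provided $c_1 \leq (12\kappa^4)^{-1}$. The outer containment $Q_\alpha^k \subset B(z_\alpha^k, C_1\eta^k)$ is obtained by bounding $\rho(x, z_\alpha^k)$ along the ancestor chain $z_\beta^j \to z_{\pi(\beta)}^{j-1} \to \cdots \to z_\alpha^k$, each consecutive step of length at most $\kappa\eta^i$ and picking up a further factor of $\kappa$ from the quasi-triangle constant. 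Summing a geometric series in $\kappa\eta$ yields the claimed $C_1 = 4\kappa^2$ precisely when $96\kappa^6\eta \leq 1$.

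For the adjacent family in (2), I would rerun the single-system construction $K$ times, using $K$ different maximal nets at each scale, engineered so that the union of all the scale-$k$ centers is dense in a quantitative sense. Given a ball $B = B(x,r)$ with $\eta^{k+1} < r \leq \eta^k$, the property in (2) demands some $t$ and $\alpha$ with $B \subseteq Q_\alpha^{k-1,t}$. Appealing to the sandwich property already established, it suffices to produce a $t$ with a scale-$(k-1)$ center $z_\alpha^{k-1,t}$ satisfying $\rho(x, z_\alpha^{k-1,t}) \leq c_0\eta^{k-1}$ for some $c_0 = c_0(\kappa)$, because then $B$ sits deep inside the Voronoi region of that center and its descendants. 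The main obstacle is producing such a good $t$ uniformly over all balls: a quantitative pigeonhole argument, bounding the number of centers that can lie in any $\eta^{k-1}$-neighbourhood of $x$ by a constant depending on $\kappa$ and $N$, is what dictates $K = K(\kappa, N, \eta)$. This is the step in which $N$ genuinely enters the statement, and it is where the finiteness of the family of dyadic systems is secured.
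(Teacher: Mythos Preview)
The paper does not actually prove this theorem: it is quoted as a black box from Hyt\"onen--Kairema \cite{HK} (with Christ \cite{C} as the antecedent), in the form stated in \cite{AHT}. So there is no ``paper's own proof'' to compare against; the appropriate benchmark is the construction in \cite{HK}.

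Your outline is a faithful high-level sketch of that construction for part (1): maximal $\eta^k$-separated nets, a parent map from fine to coarse scales, preliminary Voronoi-type cells, and then the union over descendants to obtain the half-open cubes. The justification you give for the sandwich in (c) is the right shape, and the role of the hypothesis $96\kappa^6\eta\le 1$ is correctly identified as the slack needed for the geometric series in the outer containment.

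The part that is genuinely underspecified is (2). In \cite{HK} the adjacent systems are not obtained by simply choosing $K$ arbitrary maximal nets; rather, one fixes a single reference net and builds the $K$ systems by a controlled selection of ``new'' and ``old'' reference points across consecutive scales, so that for any point $x$ and any scale $k$ at least one of the systems has a center at distance $\lesssim \eta^{k}$ \emph{and} this center is far from the boundary of its parent. Your pigeonhole description (``bounding the number of centers that can lie in any $\eta^{k-1}$-neighbourhood of $x$'') does not by itself guarantee the containment $B\subseteq Q_\alpha^{k-1,t}$, because proximity of $x$ to a center is not enough---you also need that the whole ball avoids the boundary region of the cube, which is what the careful labelling in \cite{HK} arranges. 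This is the step where a reader would need more than what you have written.
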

By the doubling property, we know that $\mu(B(x,r))\eqsim\mu (Q_B)$. And if $Q_\alpha^{k,t}\subset Q_\beta^{k-1,t}$, by the doubling property we also know that there exists some constant $C_{\kappa, D}$ such that
$$\mu(Q_\beta^{k-1,t})\le \mu (B(z_\beta^{k-1,t}, C_1\eta^{k-1}))\le C_{\kappa, D} \mu(B(z_\alpha^{k,t}, c_1\eta^k))\le
C_{\kappa, D}\mu(Q_\alpha^{k,t}).$$

Theorem~\ref{dyadicsystem} characterizes the structure of dyadic system in spaces of homogeneous type. See also in \cite{HK1} for an exact characterization of which kinds of sets can be dyadic cubes.
Now with Theorem~\ref{dyadicsystem} we can get the following result, which was proved in \cite{ACM}.
\begin{Lemma}\label{lm:dyadicbump}
Given a pair of weights $(w,\sigma)$, and Young functions $\Phi$ and $\Psi$.
\[
[w,\sigma]_{\Phi,p}\eqsim \max_{t\in \{1,2,\cdots, K\}}[w,\sigma]_{\Phi,p}^{\mathscr D^t},\qquad
[\sigma,w]_{\Psi,p}\eqsim   \max_{t\in \{1,2,\cdots, K\}}  [\sigma,w]_{\Psi,p}^{\mathscr D^t},
\]
where
\[
[w,\sigma]_{\Phi,p}^{\mathscr D^t}:=\sup_{Q\in\mathscr D^t} \bigg(\dashint_Q w d\mu\bigg)^{\frac 1p}\|\sigma^{\frac 1{p'}}\|_{\Phi, Q}
\]
and $[\sigma,w]_{\Psi,p}^{\mathscr D^t}$ is defined similarly.
\end{Lemma}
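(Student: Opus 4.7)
The plan is to exploit the two-sided comparability between dyadic cubes and balls afforded by Theorem~\ref{dyadicsystem}. Both equivalences follow the same template, so I focus on the first one, $[w,\sigma]_{\Phi,p}\eqsim \max_t [w,\sigma]_{\Phi,p}^{\mathscr D^t}$; the second is obtained verbatim after interchanging the roles of $(w,\Phi,p)$ and $(\sigma,\Psi,p')$.

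For the ``$\gtrsim$'' direction, I fix $Q=Q_\alpha^{k,t}\in\mathscr D^t$. Item (1)(c) of Theorem~\ref{dyadicsystem} sandwiches $Q$ between $B(z_\alpha^{k,t},c_1\eta^k)$ and $B_{\rm out}:=B(z_\alpha^{k,t},C_1\eta^k)$, and the doubling property yields $\mu(B_{\rm out})\le C_{\kappa,D}\mu(Q)$. This immediately gives $\dashint_Q w\lesssim \dashint_{B_{\rm out}} w$. The Luxemburg half $\|\sigma^{1/p'}\|_{\Phi,Q}\lesssim \|\sigma^{1/p'}\|_{\Phi,B_{\rm out}}$ needs a short argument: for $\lambda>\|\sigma^{1/p'}\|_{\Phi,B_{\rm out}}$ one has $\dashint_{B_{\rm out}}\Phi(\sigma^{1/p'}/\lambda)\le 1$, and restricting the integration set to $Q\subseteq B_{\rm out}$ blows this up by at most $\mu(B_{\rm out})/\mu(Q)\le C_{\kappa,D}$. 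To pull the bound back under $1$ I would invoke the convexity inequality $\Phi(t/M)\le \Phi(t)/M$, valid for $M\ge 1$ because $\Phi$ is convex with $\Phi(0)=0$, and then replace $\lambda$ by $C_{\kappa,D}\lambda$.

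For the ``$\lesssim$'' direction, I fix a ball $B=B(x,r)$ and apply item (2) of Theorem~\ref{dyadicsystem} to produce $Q_B\in\cup_t\mathscr D^t$ with $B\subseteq Q_B$ and $\mu(Q_B)\eqsim \mu(B)$. The same two-step argument now runs with $B$ and $Q$ swapped: $\dashint_B w\lesssim \dashint_{Q_B}w$ by doubling, and the same convexity trick gives $\|\sigma^{1/p'}\|_{\Phi,B}\lesssim \|\sigma^{1/p'}\|_{\Phi,Q_B}$. Taking the supremum over all balls $B$ then gives $[w,\sigma]_{\Phi,p}\lesssim \max_t [w,\sigma]_{\Phi,p}^{\mathscr D^t}$ with a constant depending only on the family index $t\in\{1,\dots,K\}$ which was used to house $Q_B$.

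The only genuinely delicate point is the Luxemburg norm comparison, since the averaging built into $\|\cdot\|_{\Phi,E}$ obstructs any naive set-monotonicity argument when $E$ is replaced by a set of merely comparable (not equal) measure. The convexity bound $\Phi(t/M)\le M^{-1}\Phi(t)$ is precisely what absorbs the doubling ratio into a renormalization of $\lambda$; once this observation is in hand, the rest of the proof is routine bookkeeping with the constants $c_1$, $C_1$, $D$, $\kappa$ and $K$.
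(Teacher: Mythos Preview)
Your argument is correct, and it is exactly the route the paper points to: the paper does not actually write out a proof of this lemma but states that ``with Theorem~\ref{dyadicsystem} we can get the following result, which was proved in \cite{ACM}.'' Your two directions---passing from a dyadic cube to its outer ball via item~(1)(c), and from an arbitrary ball to its containing cube $Q_B$ via item~(2)---together with the convexity inequality $\Phi(t/M)\le M^{-1}\Phi(t)$ to transfer Luxemburg norms between sets of comparable measure, constitute precisely the standard argument underlying the cited reference.
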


Now for any fixed $\mathscr D^t$, $t\in \{1,2,\cdots, K\}$, we call a family $\mathcal S\subset \mathscr D^t$ sparse
if for any $Q\in \mathcal S$, $\mu(E(Q))\ge \frac 12 \mu(Q) $, where $E(Q)=Q\setminus \cup_{Q'\in\mathcal S, Q'\subsetneq Q} Q'$.

Our purpose is to reduce the estimates for Calder\'on-Zygmund operators to the following so-called sparse operators,
\[
T^{\mathcal S} (f)(x):=\sum_{Q\in \mathcal S} \dashint_Q f(y)d\mu(y) \mathbf 1_Q(x),
\]
where $\mathcal S\subset \mathscr D$ is a sparse family in some dyadic system $\mathscr D$.
In \cite{L}, Lerner gave a nice formula which reduces the norm of Calder\'on-Zygmund operators to such kind of sparse operators. (In the recent book by Lerner and Nazarov \cite{LN}, it has been shown that Calder\'on-Zygmund operators can be dominated pointwise by the sparse operators.)  In \cite{ACM}, the authors showed that Lerner's formula also holds in spaces of homogeneous type.
\begin{Lemma}
Given an $SHT(X, \rho, \mu)$ and a Calder\'on-Zygmund operator $T$, then
for any Banach function space $Y$,
\[
\|T(f\sigma)\|_{Y}\le C_{D,\kappa}\sup_{\mathscr D^t, \mathcal S} \|T^{\mathcal S}(f\sigma)\|_{Y},
\]
where the supremum is taken over every dyadic system $\mathscr D^t$, $t=1,2,\cdots, K$ and every sparse family $\mathcal S$ in $\mathscr D^t$.
\end{Lemma}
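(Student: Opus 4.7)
The plan is to follow Lerner's local-oscillation framework, adapted to spaces of homogeneous type in \cite{ACM}. By splitting $f = f^+ - f^-$ it suffices to treat $f \ge 0$, in which case $T^{\mathcal S}(f\sigma)\ge 0$. Setting $g := T(f\sigma)$, I would proceed in three steps: (i) apply a local mean oscillation decomposition of $g$ in each dyadic system $\mathscr D^t$; (ii) prove a Calder\'on-Zygmund oscillation estimate bounding $\omega_\lambda(g;Q)$ by an average of $f\sigma$ over a fixed enlargement $Q^\ast$; (iii) repackage the resulting sum as sparse operators by reindexing and a stopping-time argument.

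For step (i), fix $\mathscr D^t$ and a large $Q_0\in\mathscr D^t$. The SHT version of Lerner's formula gives, for a.e.\ $x\in Q_0$,
\[
|g(x) - m_{Q_0}(g)| \le C_\kappa \sum_{\substack{Q\subset Q_0 \\ Q\in \mathcal F}} \omega_\lambda(g;Q)\,\mathbf 1_{E(Q)}(x),
\]
where $\mathcal F\subset\mathscr D^t$ is a sparse family (built via a Calder\'on-Zygmund stopping time on $g$), $m_{Q_0}(g)$ is a median, and $\lambda$ is a small absolute constant. Exhausting $X$ by $Q_0$ and using that medians of $g$ tend to $0$ at infinity eliminates the median term. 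For step (ii), the standard decomposition $f\sigma = f\sigma\mathbf 1_{Q^\ast} + f\sigma\mathbf 1_{X\setminus Q^\ast}$ with $Q^\ast$ a fixed quasi-metric dilate of $Q$ yields
\[
\omega_\lambda(T(f\sigma);Q) \le C_T\,\dashint_{Q^\ast} f\sigma\,d\mu,
\]
via the weak-type $(1,1)$ bound for $T$ on the local piece, and the H\"older smoothness of the kernel combined with doubling on the far piece.

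For step (iii), the dilate $Q^\ast$ may not belong to $\mathscr D^t$, but Theorem~\ref{dyadicsystem}(2) furnishes a cube $\widetilde Q \in \mathscr D^{t'}$ for some $t'\in\{1,\dots,K\}$ containing $Q^\ast$, with $\mu(\widetilde Q)\lesssim_{D,\kappa}\mu(Q)$. Grouping the $\widetilde Q$ by the index $t'$ and verifying that each resulting subfamily is itself $\frac12$-sparse in $\mathscr D^{t'}$ (after possibly running a second stopping time to trim) produces sparse families $\mathcal S_{t'}\subset\mathscr D^{t'}$ satisfying, pointwise,
\[
T(f\sigma)(x) \le C_{D,\kappa}\sum_{t'=1}^{K} T^{\mathcal S_{t'}}(f\sigma)(x).
\]
Taking $Y$-norm and using the triangle inequality in the Banach function space $Y$ finishes the proof, since the finite sum over $t'$ is swallowed by replacing each summand by the largest, which in turn is dominated by the supremum on the right-hand side.

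The main obstacle I anticipate is step (iii): verifying sparseness of the reindexed family. A priori, passing from $Q\in\mathscr D^t$ to $\widetilde Q\in\mathscr D^{t'}$ may cause several $Q$'s to collapse to the same $\widetilde Q$, so one must argue by a Whitney/pigeonhole decomposition and possibly re-prune to restore sparseness with constants depending only on $(\kappa,D,K)$, themselves controlled by $(\kappa,D)$. The other technical point is the justification of the median-oscillation formula in SHT, including that $g$ and the medians $m_{Q_0}(g)$ vanish at infinity for $f$ in a dense class (say bounded of compact support), after which one removes the approximation by standard density. Both of these are handled in \cite{ACM}, which is exactly what makes this reduction available.
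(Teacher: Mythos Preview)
The paper does not prove this lemma itself; it simply quotes the result from \cite{ACM}, which carries Lerner's local mean oscillation machinery over to spaces of homogeneous type. Your outline is exactly that Lerner/\cite{ACM} strategy, so there is nothing to compare---your route \emph{is} the cited one. One small slip: in step (i) you state Lerner's decomposition with $\mathbf 1_{E(Q)}$, but the formula carries $\mathbf 1_Q$ (the sets $E(Q)$ are pairwise disjoint, so the inequality with $\mathbf 1_{E(Q)}$ would collapse to a single term at each point and is false in general); this is harmless for the remainder of your argument since the sparse operator you are aiming for already has $\mathbf 1_Q$.
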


In the rest of this paper, we only need to prove Theorem~\ref{thm:m2} for sparse operators.  We follow the strategy of \cite{CRV}. We further reduce the problem to estimate testing condition. To be precise, we have the following, see \cite{LSU} for a proof.
\begin{Lemma}\label{lm:testing}
For fixed $t\in\{1,2,\cdots,K\}$, suppose $\mathcal S$ is a sparse family in $\mathscr D^t$. Then
\begin{eqnarray}
\|T^{\mathcal S}(\cdot \sigma)\|_{L^p(\sigma)\rightarrow L^p(w)}
&\le& \sup_R \frac{\| \sum_{Q\in \mathcal S\atop Q\subset R} \dashint_Q \sigma d\mu \mathbf 1_Q(x) \|_{L^p(w)}}{\sigma(R)^{1/p}}\label{eq:testing}\\
&&\quad
+\sup_R\frac{\| \sum_{Q\in \mathcal S\atop Q\subset R} \dashint_Q w d\mu \mathbf 1_Q(x)  \|_{L^{p'}(\sigma)}}{w(R)^{1/{p'}}}\nonumber.
\end{eqnarray}
\end{Lemma}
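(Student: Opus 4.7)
The strategy is to dualize and then run a parallel corona decomposition on the two weights. Setting $\langle f\rangle_Q^\sigma:=\sigma(Q)^{-1}\int_Q f\sigma\,d\mu$ (and analogously for $w$), positivity of $T^{\mathcal S}$ together with duality turns the operator norm into the bilinear form
\[
\mathcal B(f,g):=\sum_{Q\in\mathcal S}\frac{\sigma(Q)w(Q)}{\mu(Q)}\langle f\rangle_Q^\sigma\langle g\rangle_Q^w,
\]
and the goal is to prove $\mathcal B(f,g)\lesssim(\mathcal T+\mathcal T^*)\|f\|_{L^p(\sigma)}\|g\|_{L^{p'}(w)}$ for nonnegative $f,g$, where $\mathcal T,\mathcal T^*$ denote the first and second testing suprema in \eqref{eq:testing}.

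Next I would build parallel stopping families $\mathcal F,\mathcal G\subset\mathscr D^t$: the roots of $\mathcal F$ are the maximal cubes of $\mathcal S$, and the $\mathcal F$-children of $F$ are the maximal $Q\in\mathcal S$ strictly inside $F$ with $\langle f\rangle_Q^\sigma>2\langle f\rangle_F^\sigma$; $\mathcal G$ is built analogously from $(g,w)$. Writing $\pi_\mathcal F(Q)$ and $\pi_\mathcal G(Q)$ for the minimal elements of these families containing $Q\in\mathcal S$, the construction gives the stopping inequalities $\langle f\rangle_Q^\sigma\le 2\langle f\rangle_{\pi_\mathcal F(Q)}^\sigma$ and $\langle g\rangle_Q^w\le 2\langle g\rangle_{\pi_\mathcal G(Q)}^w$, together with the Carleson embeddings
\[
\sum_{F\in\mathcal F}(\langle f\rangle_F^\sigma)^p\sigma(F)\lesssim\|f\|_{L^p(\sigma)}^p,\qquad \sum_{G\in\mathcal G}(\langle g\rangle_G^w)^{p'}w(G)\lesssim\|g\|_{L^{p'}(w)}^{p'},
\]
which rely on the doubling-$2$ stopping threshold and the sparseness $\mu(E(Q))\ge\mu(Q)/2$.

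Then I would split $\mathcal S=\mathcal S_{\le}\sqcup\mathcal S_{>}$ according to whether $\pi_\mathcal F(Q)\subseteq\pi_\mathcal G(Q)$ or $\pi_\mathcal F(Q)\supsetneq\pi_\mathcal G(Q)$; by symmetry it suffices to bound $\mathcal B_{\le}$. Reorganize the sum by $G=\pi_\mathcal G(Q)$ and use the stopping bound to pull out $\langle g\rangle_G^w$; dominating the restricted inner sum by its full $G$-restricted counterpart rewrites the inner expression as $\int_G f\,\bigl(\sum_{Q'\subseteq G,Q'\in\mathcal S}\dashint_{Q'}w\,d\mu\,\mathbf 1_{Q'}\bigr)\sigma\,d\mu$. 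H\"older in $L^p(\sigma)$ on $G$ together with the dual testing supremum in \eqref{eq:testing} bounds this by $\mathcal T^* w(G)^{1/p'}\|f\mathbf 1_G\|_{L^p(\sigma)}$. A further H\"older on the sum over $G$, combined with the $\mathcal G$-Carleson embedding and a telescoping to control the overlap $\sum_G\mathbf 1_G$, yields $\mathcal B_{\le}\lesssim\mathcal T^*\|f\|_{L^p(\sigma)}\|g\|_{L^{p'}(w)}$. The symmetric argument for $\mathcal S_{>}$ with the roles of $(f,\sigma)$ and $(g,w)$ exchanged produces the $\mathcal T$-contribution.

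The main obstacle is the overlap bookkeeping in this last step, i.e.\ bounding $\sum_G\|f\mathbf 1_G\|_{L^p(\sigma)}^p$ by $\|f\|_{L^p(\sigma)}^p$ even though $\mathcal G$ is sparse only with respect to $w$, not $\sigma$. This is precisely where the two interlocked stopping families become essential: a joint telescoping along the $\mathcal F$- and $\mathcal G$-corona trees collapses the nested overlap into a universal constant that depends only on the stopping doubling constant and the sparseness parameter. All remaining arithmetic is internal to the fixed dyadic system $\mathscr D^t$ furnished by Theorem~\ref{dyadicsystem}, so the $SHT$ structure enters only through the doubling of $\mu$, and the Euclidean Lacey--Sawyer--Uriarte-Tuero argument transfers verbatim.
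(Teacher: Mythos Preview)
The paper does not prove this lemma; it simply refers the reader to \cite{LSU}. Your sketch follows the parallel--corona strategy of that reference, so at the level of overall approach there is nothing to compare.

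There is, however, a genuine gap at precisely the point you flag. Once you ``dominate the restricted inner sum by its full $G$-restricted counterpart'', the bound you then need,
\[
\sum_{G\in\mathcal G}\|f\mathbf 1_G\|_{L^p(\sigma)}^p=\int f^p\Bigl(\sum_{G\in\mathcal G}\mathbf 1_G\Bigr)\,d\sigma\lesssim\|f\|_{L^p(\sigma)}^p,
\]
is simply false in general: the stopping family $\mathcal G$ is $w$-Carleson, but a fixed point can lie in arbitrarily many nested $G\in\mathcal G$, and the second family $\mathcal F$ carries only $\sigma$-Carleson information about the \emph{other} tree. No ``joint telescoping'' can undo the damage already done by the enlargement, because after enlarging to all $Q\subseteq G$ the inner sum no longer remembers the corona structure at all.

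In the actual argument of \cite{LSU} one does \emph{not} enlarge. Keeping the restriction $\pi_{\mathcal G}(Q)=G$ forces the $\sigma$-integral $\int_Q f\,d\sigma$ to split into a piece on the corona set $E_{\mathcal G}(G):=G\setminus\bigcup_{G'\in\mathrm{ch}_{\mathcal G}(G)}G'$ and a remainder over the stopping children $G'\subsetneq Q$. The corona sets are pairwise disjoint, so after H\"older that part sums to at most $\|f\|_{L^p(\sigma)}^p$ exactly; the remainder is then controlled separately using the $w$-Carleson packing $\sum_{G'\in\mathrm{ch}_{\mathcal G}(G)}w(G')\le\tfrac12 w(G)$ together with the stopping bound on $\langle g\rangle^w$. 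With that correction your outline goes through, and your closing remark is right: the argument uses only the abstract dyadic lattice of Theorem~\ref{dyadicsystem}, so the passage from $\bbR^n$ to a general $SHT$ is indeed automatic.
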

Before we give further estimates, we introduce the following result. In the Euclidean case this is due to P\'erez \cite{P}, and in spaces of homogeneous type, see  P\'erez and Wheeden \cite{PW} and Pradolini and Salinas \cite{PS}. We give the version used in \cite{ACM}.
\begin{Lemma}\label{lm:maximalbump}
Given $p$, $1<p<\infty$ and an $SHT(X,\rho,\mu)$ and a Young function $\Phi$ such that $\Phi\in B_p$, then
\[
\|M_\Phi^{\mathscr D}\|_{L^p(\mu)}\le C_{\kappa, D}[\Phi]_{B_p}^{1/p}\|f\|_{L^p(\mu)},
\]
where $\mathscr D$ is some dyadic system in $X$ and
\[
M_\Phi^{\mathscr D} f(x):=\sup_{Q\ni x\atop Q\in \mathscr D}\|f\|_{\Phi, Q}.
\]
\end{Lemma}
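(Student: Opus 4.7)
The approach is the classical P\'erez-style argument: decompose the level sets of $M_\Phi^{\mathscr D} f$ into maximal dyadic cubes, use the definition of the Luxemburg norm together with the convexity of $\Phi$ to convert the covering estimate into an $L^1$ bound involving $\Phi(|f|/\lambda)$, and then recover the $B_p$ constant through a single change of variables in the resulting layer-cake integral.

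First I would fix $\lambda>0$ and invoke the nested-cube structure of $\mathscr D$ from Theorem~\ref{dyadicsystem} to write $\Omega_\lambda := \{M_\Phi^{\mathscr D} f > \lambda\}$ as a disjoint union of maximal cubes $Q_j^\lambda \in \mathscr D$ with $\|f\|_{\Phi, Q_j^\lambda} > \lambda$. The Luxemburg norm definition then forces $\dashint_{Q_j^\lambda} \Phi(|f|/\lambda)\, d\mu > 1$ on each such cube.

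Next I would peel off the contribution of the small values of $f$. Fix a small $\tau>0$ (ultimately $\tau = 1/4$) and apply the convexity bound $\Phi(a+b)\le \tfrac{1}{2}\Phi(2a) + \tfrac{1}{2}\Phi(2b)$ to the decomposition $|f|/\lambda = |f|\mathbf{1}_{|f|>\tau\lambda}/\lambda + |f|\mathbf{1}_{|f|\le\tau\lambda}/\lambda$; the small-value piece is pointwise controlled by $\Phi(2\tau)/2$, so if $\Phi(2\tau)<2$ I can solve for the complementary piece and obtain $\mu(Q_j^\lambda) \lesssim \int_{Q_j^\lambda \cap \{|f|>\tau\lambda\}} \Phi(2|f|/\lambda)\, d\mu$. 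Summing over the disjoint $Q_j^\lambda$ yields
\[
\mu(\Omega_\lambda) \;\lesssim\; \int_{\{|f|>\tau\lambda\}} \Phi(2|f|/\lambda)\, d\mu.
\]

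Finally I would plug this into the layer-cake identity $\|M_\Phi^{\mathscr D} f\|_{L^p(\mu)}^p = p\int_0^\infty \lambda^{p-1}\mu(\Omega_\lambda)\,d\lambda$, exchange the order of integration by Fubini, and substitute $u = 2|f(x)|/\lambda$ in the inner integral. This transforms the inner integral into $(2|f(x)|)^p \int_{2\tau}^\infty \Phi(u)\,u^{-p-1}\,du$, and with $\tau = 1/4$ the lower endpoint matches the one in the definition of $[\Phi]_{B_p}$, giving the claimed bound $\|M_\Phi^{\mathscr D} f\|_{L^p(\mu)}^p \lesssim [\Phi]_{B_p}\|f\|_{L^p(\mu)}^p$.

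The main obstacle I anticipate is simply the book-keeping around the choice of $\tau$: one needs $\Phi(2\tau)<2$ for the convexity step to produce a useful lower bound, and $2\tau = 1/2$ to match the lower limit in $[\Phi]_{B_p}$. This is routine once one observes that the $B_p$ integral with any fixed positive lower cutoff is equivalent to the one with cutoff $1/2$ up to a multiplicative constant depending only on $p$, so any small mismatch can be absorbed into $C_{\kappa,D}$. Notably, the space-of-homogeneous-type hypothesis enters only through the disjoint dyadic decomposition of $\Omega_\lambda$ guaranteed by Theorem~\ref{dyadicsystem}(1); no further geometric input is required.
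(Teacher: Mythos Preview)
The paper does not supply its own proof of this lemma: it is quoted from the literature, with the Euclidean case attributed to P\'erez \cite{P} and the extension to spaces of homogeneous type to P\'erez--Wheeden \cite{PW} and Pradolini--Salinas \cite{PS}, in the formulation recorded in \cite{ACM}. Your sketch is exactly the classical P\'erez layer-cake argument carried out in those references, so in substance your proposal coincides with what the paper is invoking; the only role of the ambient $SHT$ structure is, as you say, to guarantee the disjoint maximal-cube decomposition of $\Omega_\lambda$ via Theorem~\ref{dyadicsystem}.

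One genuine technical point deserves more care than your final paragraph gives it. Your remedy for the constraint $\Phi(2\tau)<2$ is to say that the $B_p$ integral with an arbitrary fixed positive lower cutoff is comparable to the one with cutoff $1/2$. That is true when the cutoff is fixed \emph{independently of $\Phi$}, but the $\tau$ you actually need is determined by the condition $\Phi(2\tau)<2$ and hence varies with $\Phi$; for steep Young functions (e.g.\ $\Phi(t)=(t/\varepsilon)^q$ with $\varepsilon$ small) this forces $\tau$ to be small and the comparison constant blows up. The standard way the cited references close this gap is either to normalise so that $\Phi(1)\le 1$ (which costs nothing in the paper's application, where the relevant $\bar\Phi_0$ satisfy $\bar\Phi_0(1)\eqsim_{p,\kappa,D}1$), or to replace the crude convexity split by the stopping-time observation that the dyadic parent $\hat Q_j^\lambda$ of each maximal cube satisfies $\dashint_{\hat Q_j^\lambda}\Phi(|f|/\lambda)\,d\mu\le 1$, which together with the doubling bound $\mu(\hat Q_j^\lambda)\le C_{\kappa,D}\mu(Q_j^\lambda)$ lets one restrict to $\{|f|>\lambda/2\}$ without ever invoking $\Phi(2\tau)<2$. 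Either route yields the stated inequality with a constant depending only on $\kappa$, $D$ and $p$.
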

Then by Theorem~\ref{dyadicsystem} we immediately get
\begin{equation}\label{eq:maximal}
\|M_\Phi\|_{L^p(\mu)}\le C_{\kappa, D}'[\Phi]_{B_p}^{1/p}\|f\|_{L^p(\mu)}.
\end{equation}

Now by symmetry we concentrate on the first term of \eqref{eq:testing}. We follow  the technique introduced in \cite{HL}, see also in \cite{CRV}.
For convenience, set $\langle f\rangle_Q= \dashint_Q f d\mu$ and denote
\[
\mathcal S_a:= \{ Q\in \mathcal S: 2^a<  \langle w\rangle_Q  \langle\sigma \rangle_Q ^{p-1} \le 2^{a+1} \,\,\mbox{and}\,\, Q\subset R \}.
\]
Denote by $\mathcal P_0^a$
the collection of maximal cubes in $\mathcal S_a$. Now we define
\[
\mathcal P_n^a:=\{\mbox{maximal cubes}\,\, P'\subset P \in \mathcal P_{n-1}^a \,\,\mbox{such that}\,\,
P'\in \mathcal S_a, \langle \sigma\rangle_{P'} > 2\langle \sigma\rangle_{P} \}.
\]
Then denote $\mathcal P^a:=\cup_n \mathcal P_n^a$. For any $P\in \mathcal P^a$, set
\[
\mathcal S_a(P):=\{Q\in \mathcal S_a : \pi(Q)=P\},
\]
where $\pi(Q)$ is the minimal cube in $\mathcal P^a$ which contains $Q$.
 We have the following Lemma, which was proved in \cite{HL} (see also in \cite{LM}) in the Euclidean setting and it is still valid for the spaces of homogeneous type with no change of the proof.
\begin{Lemma}\label{decay}
There exists a constant $c$ such that
\[
w\{x\in P: T^{\mathcal S_a(P)}(\sigma)>t  \langle \sigma\rangle_P\}\lesssim e^{-ct}w(P),
\]
where
\[
T^{\mathcal S_a(P)} (f)(x):=\sum_{Q\in \mathcal S_a(P)} \dashint_Q f(y)d\mu(y) \mathbf 1_Q(x).
\]
\end{Lemma}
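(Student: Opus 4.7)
I will follow the strategy of Hyt\"onen-Lacey \cite{HL} (see also Lerner-Moen), which transfers to the $SHT$ setting with no essential change once sparse and stopping families are taken inside the dyadic lattices $\mathscr{D}^t$ of Theorem~\ref{dyadicsystem}.

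The argument rests on the structural fact that every $Q\in \mathcal{S}_a(P)$ satisfies $\langle\sigma\rangle_Q\le 2\langle\sigma\rangle_P$: if this failed, the maximal $\mathcal{S}_a$-ancestor of $Q$ inside $P$ with doubled $\sigma$-average would belong to $\mathcal{P}^a$ and lie strictly inside $P$, contradicting $\pi(Q)=P$. Consequently, setting $N(x):=\#\{Q\in \mathcal{S}_a(P):x\in Q\}$, one obtains the pointwise bound
\[
T^{\mathcal{S}_a(P)}(\sigma)(x)\;\le\;2\langle\sigma\rangle_P\cdot N(x),
\]
so the claim reduces to the John-Nirenberg-type estimate
\[
w\{x\in P:N(x)>t\}\;\lesssim\;e^{-ct}w(P).
\]

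To prove this, I will build a Calder\'on-Zygmund stopping tree $\mathcal{G}\subset \mathcal{S}_a(P)$ rooted at $P$: the $\mathcal{G}$-children of a cube $G\in\mathcal{G}$ are the maximal $\mathcal{S}_a(P)$-cubes $Q\subsetneq G$ with $\langle w\rangle_Q>2\langle w\rangle_G$. Writing $\pi_\mathcal{G}(Q)$ for the minimal $\mathcal{G}$-cube containing $Q$, this yields the uniform bound $\langle w\rangle_Q\le 2\langle w\rangle_{\pi_\mathcal{G}(Q)}$ for every $Q\in \mathcal{S}_a(P)$. Put $\mathcal{H}(G):=\{Q\in \mathcal{S}_a(P):\pi_\mathcal{G}(Q)=G\}$; this is a sparse subtree inside $G$ whose members all have $w$-averages $\le 2\langle w\rangle_G$, so $w$ is comparable to $\langle w\rangle_G\,d\mu$ on their level sets. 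The $\mu$-exponential decay of the $\mathcal{H}(G)$-counting function coming from sparsity then upgrades directly to the local $w$-estimate
\[
w\bigl\{x\in G:\ \#\{Q\in \mathcal{H}(G):x\in Q\}>k\bigr\}\;\lesssim\;2^{-k}w(G).
\]

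Finally, I will glue these local estimates. For $x\in P$, the $\mathcal{G}$-cubes containing $x$ form a chain $G_x^{(0)}\supsetneq G_x^{(1)}\supsetneq\cdots$ and $N(x)=\sum_i \#\{Q\in \mathcal{H}(G_x^{(i)}):x\in Q\}$; a pigeonhole/moment argument combining the per-layer geometric decay with the packing property of the $\mathcal{G}$-tree inherited from the $\mu$-sparsity of $\mathcal{S}_a(P)$ produces the required global exponential tail of $N$ in $w$-measure.

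\textbf{Main obstacle.} The $\mathcal{S}_a$-band $\langle w\rangle_Q\langle\sigma\rangle_Q^{p-1}\sim 2^a$ together with $\langle\sigma\rangle_Q\le 2\langle\sigma\rangle_P$ only yields a \emph{lower} bound $\langle w\rangle_Q\gtrsim \langle w\rangle_P$, with no matching upper bound. Consequently $\mathcal{G}$ does not automatically satisfy a $w$-Carleson packing on its own, and the gluing step must carefully interweave the $w$-stopping with the $\mu$-sparsity of $\mathcal{S}_a(P)$ to close the exponential estimate—this is the technical heart of the argument.
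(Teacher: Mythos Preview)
The paper does not give its own proof of this lemma; it merely cites Hyt\"onen--Lacey \cite{HL} and Lerner--Moen \cite{LM} and asserts that the argument transfers to SHT verbatim. So I compare your plan against what actually makes that argument work.

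Your reduction to the counting function $N(x)=\#\{Q\in\mathcal S_a(P):x\in Q\}$ is a genuine gap: the inequality $w\{N>t\}\lesssim e^{-ct}w(P)$ with a weight-independent $c$ is \emph{false}, so no ``careful interweaving'' in the gluing step can close it. Here is a concrete obstruction already on $\mathbb R$. Fix $p=2$, take $w(x)=x^{-1+\epsilon}$ and $\sigma(x)=x^{\,1-\epsilon}$ on $P=[0,1]$, and let the sparse family be $\{[0,2^{-k}]:0\le k\le n\}$. One checks that $\langle w\rangle_{[0,2^{-k}]}\langle\sigma\rangle_{[0,2^{-k}]}$ is \emph{independent of $k$}, so all these intervals lie in a single band $\mathcal S_a$; moreover $\langle\sigma\rangle_{[0,2^{-k}]}$ is decreasing in $k$, so no $\sigma$-stopping is triggered and $\mathcal S_a(P)$ is this full chain. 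Then $\{N>t\}=[0,2^{-\lfloor t\rfloor}]$ and $w\{N>t\}/w(P)=2^{-\lfloor t\rfloor\epsilon}$, whose exponential rate $\epsilon$ can be made arbitrarily small. Thus the target estimate on $N$ fails uniformly, even though the lemma itself holds trivially here: since the $\langle\sigma\rangle_{[0,2^{-k}]}$ form a geometric series, $T^{\mathcal S_a(P)}(\sigma)\le C_p\,\langle\sigma\rangle_P$ pointwise and the super-level sets are empty for $t>C_p$.

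This example isolates exactly the failure mode you flagged as the ``main obstacle'' but then hoped to finesse: when $\langle\sigma\rangle_Q$ drops far below $\langle\sigma\rangle_P$, the band forces $\langle w\rangle_Q$ to blow up and the $w$-mass of deep cubes need not decay. Replacing each $\langle\sigma\rangle_Q$ by the crude upper bound $2\langle\sigma\rangle_P$ throws away precisely the compensation that makes the lemma true --- namely, that those same deep cubes contribute very little to $T^{\mathcal S_a(P)}(\sigma)$. The Hyt\"onen--Lacey argument does \emph{not} pass through $N(x)$. It runs a John--Nirenberg iteration directly on the super-level sets of $T^{\mathcal S_a(P)}(\sigma)$: one selects maximal $R\in\mathcal S_a(P)$ where the partial sum $\sum_{Q\supseteq R}\langle\sigma\rangle_Q$ first exceeds a fixed threshold $K\langle\sigma\rangle_P$, and then uses the band relation $\langle w\rangle_R\langle\sigma\rangle_R^{p-1}\sim 2^a$ \emph{together with} the actual size of $\langle\sigma\rangle_R$ (not just the bound $2\langle\sigma\rangle_P$) and $\mu$-sparseness to obtain $\sum_R w(R)\le\tfrac12 w(P)$ for a suitable $K=K(p,\kappa,D)$; iterating gives the exponential decay. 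You should rewrite the plan along these lines rather than via $N(x)$.
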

Now we are ready to estimate the first term on the right side of \eqref{eq:testing}.
We have
\begin{eqnarray*}
\Big\| \sum_{Q\in \mathcal S\atop Q\subset R}\langle\sigma\rangle_Q \mathbf 1_Q(x) \Big\|_{L^p(w)}
&\le&\sum_{a}\Big\| \sum_{Q\in \mathcal S_a } \langle\sigma\rangle_Q \mathbf 1_Q(x) \Big\|_{L^p(w)} \\
&=&\sum_{a}\Big\|\sum_{P\in \mathcal P^a}T^{\mathcal S_a(P)}(\sigma) \Big\|_{L^p(w)}.
\end{eqnarray*}
Set
\[
L_{j}^a(P):=\bigg\{x: T^{\mathcal S_a(P)}(\sigma)(x)\in [j, j+1)\langle\sigma\rangle_P\bigg\}.
\]
By Lemma~\ref{decay} we have
\begin{eqnarray*}
\Big\|\sum_{P\in \mathcal P^a} T^{\mathcal S_a(P)}(\sigma) \Big\|_{L^p(w)}
&\le&\sum_{j=0}^\infty(j+1)\Big\|\sum_{P\in \mathcal P^a} \langle\sigma\rangle_P\mathbf 1_{L_j^a(P)}(x) \Big\|_{L^p(w)}\\
&\lesssim&\sum_{j=0}^\infty(j+1)\Big(\sum_{P\in\mathcal P^a}\langle\sigma\rangle_P^p e^{-cj}w(P)\Big)^{\frac1p}\\
&\lesssim& \Big(\sum_{P\in\mathcal P^a}\langle\sigma\rangle_P^p w(P)\Big)^{\frac1p}.
\end{eqnarray*}
Therefore,
\[
\Big\| \sum_{Q\in \mathcal S\atop Q\subset R}\langle\sigma\rangle_Q \mathbf 1_Q(x) \Big\|_{L^p(w)}
\lesssim\sum_a\Big(\sum_{P\in\mathcal P^a}\langle\sigma\rangle_P^p w(P)\Big)^{\frac1p}.
\]
We follow the idea of \cite{CRV}. Define $\Phi_0(t)=t^{p'(r+1)/2}$, set $\gamma=\frac1{2(r+1)}$.
Since $r>1$ and it is dominated by some constant that depends only on the structure constant of $X$, it is easy to check that
\begin{eqnarray*}
[\bar\Phi_0]_{B_p}&=&\int_{1/2}^\infty \frac{\bar \Phi_0(t)}{t^p}\frac {dt}t\\
&\le&c_{p,\kappa, D}\frac{ p'(r+1) -2}{p(r-1)}2^{\frac{p(r-1)}{ p'(r+1) -2}}\\
&\le& c_{p,\kappa, D}' \frac{ p'r-1}{(r-1)p}2^{\frac{(r-1)p}{ p'r-1}}=c_{p,\kappa, D}''[\bar\Phi]_{B_p}.
\end{eqnarray*}
Now notice that
\[
\frac {\frac{2r+1}{4}}{r}+\frac 14=\frac34+\frac1{4r}<1.
\]
We have
\begin{eqnarray*}
\dashint_Q \sigma^{\frac {r+1}2}\le \bigg(\dashint_Q \sigma^{\frac {2r+1}4\cdot \frac r{\frac{2r+1}{4}}}\bigg)^{\frac {\frac{2r+1}{4}}{r}}\cdot  \bigg( \dashint_Q   \sigma^{\frac 14\cdot 4}     \bigg)^{\frac 14}.
\end{eqnarray*}
It follows that
\[
\|\sigma^{\frac 1{p'}}\|_{\Phi_0,Q}\le \|\sigma^{\frac 1{p'}}\|_{\Phi,Q}^{1-\gamma}\cdot\|\sigma^{\frac 1{p'}}\|_{p',Q}^{\gamma}.
\]
Therefore, by the sparseness and Lemma~\ref{lm:dyadicbump}
\begin{eqnarray*}
\sum_{P\in\mathcal P^a}\langle\sigma\rangle_P^p w(P)
&\le& \sum_{P\in\mathcal P^a} w(P)\|\sigma^{\frac 1 {p'}}\|_{\Phi_0,P}^p \|\sigma^{\frac 1p}\|_{\bar\Phi_0, P}^p\\
&\le& \sum_{P\in\mathcal P^a}\langle w\rangle_P\|\sigma^{\frac 1 {p'}}\|_{\Phi,P}^{p(1-\gamma)} \|\sigma^{\frac 1{p'}}\|_{p',P}^{p\gamma}\|\sigma^{\frac 1p}\|_{\bar\Phi_0, P}^p \mu(P)\\
&\le& [w,\sigma]_{\Phi,p}^{p(1-\gamma)}2^{(a+1)\gamma} \sum_{P\in\mathcal P^a}\|\sigma^{\frac 1p}\|_{\bar\Phi_0, P}^p \mu(P)\\
&\lesssim& [w,\sigma]_{\Phi,p}^{p(1-\gamma)}2^{(a+1)\gamma} \int M_{\bar\Phi_0}^{\mathscr D}(\mathbf 1_R\sigma^{\frac 1p})(x)^pd\mu\\
&\lesssim& [w,\sigma]_{\Phi,p}^{p(1-\gamma)}2^{(a+1)\gamma} [\bar \Phi_0]_{B_p}\sigma(R)\quad\mbox{(by Lemma~\ref{lm:maximalbump})}\\
&\lesssim&[w,\sigma]_{\Phi,p}^{p(1-\gamma)}2^{(a+1)\gamma}[\bar\Phi]_{B_p}\sigma(R).
\end{eqnarray*}
Consequently,
\begin{eqnarray*}
\frac{\Big\| \sum_{Q\in \mathcal S\atop Q\subset R}\langle\sigma\rangle_Q \mathbf 1_Q(x) \Big\|_{L^p(w)}}{\sigma(R)^{1/p}}
&\lesssim& [w,\sigma]_{\Phi,p}^{(1-\gamma)}[\bar\Phi]_{B_p}^{\frac 1p}\sum_a 2^{(a+1)\gamma/p}\\
&\le&C_{p, D, \kappa}[w,\sigma]_{\Phi,p}^{(1-\gamma)}[\bar\Phi]_{B_p}^{\frac 1p} [w,\sigma]_{A_p}^{\gamma/p}\\
&\le& C_{p, D, \kappa}[w,\sigma]_{\Phi,p} [\bar\Phi]_{B_p}^{\frac 1p} .
\end{eqnarray*}
This completes the proof.

\textbf{Acknowledgements}.\,\, This work was done while the author was visiting Department of Mathematics and Statistics,
University of Helsinki. He thanks the Department of Mathematics and Statistics, University of Helsinki and Professor Tuomas P. Hyt\"onen for hospitality and support. He thanks Olli Tapiola for the nice talk on the weak $A_\infty$ class.
Particular thanks go to Professor Tuomas P. Hyt\"onen for carefully reading the paper and many helpful suggestions.

\end{document}